\documentclass[11pt]{amsart}

\usepackage[T1]{fontenc}
\usepackage{lmodern}
\usepackage{amsmath,amssymb,amsthm,mathtools}
\usepackage{graphicx}
\usepackage{microtype}
\usepackage[hidelinks]{hyperref}

\newtheorem{theorem}{Theorem}[section]
\newtheorem{lemma}[theorem]{Lemma}
\newtheorem{corollary}[theorem]{Corollary}
\newtheorem{proposition}[theorem]{Proposition}
\theoremstyle{remark}
\newtheorem{remark}[theorem]{Remark}

\newcommand{\D}{\Delta}
\newcommand{\Int}{\operatorname{int}}
\newcommand{\Nbd}{\mathcal N}
\newcommand{\cl}{\operatorname{cl}}

\title[Reducible handle additions]{Reducible handle additions along separating slopes}
\author{Qilong Guo}
\thanks{College of Science, China University of Petroleum-Beijing, Beijing 102249, P. R. China. Corresponding author: guoqilong1984@hotmail.com.}
\address{College of Science, China University of Petroleum-Beijing, Beijing 102249, P. R. China}
\email{guoqilong1984@hotmail.com}
\date{}

\subjclass[2020]{57K30, 57K32}
\keywords{3-manifold, handle addition, sutured manifold, separating slope, reducible handle addition}

\begin{document}

\begin{abstract}
Let $M$ be a compact orientable simple $3$--manifold, and let $F$ be a boundary component of genus at least two. Qiu and Zhang proved that if $\alpha,\beta\subset F$ are separating slopes and both $M(\alpha)$ and $M(\beta)$ are reducible, then $\D(\alpha,\beta)\le2$. We prove that in fact $\D(\alpha,\beta)=0$, settling the reducible--reducible case of their conjecture on separating handle additions.
\end{abstract}

\maketitle

\section{Introduction}

Let $M$ be a compact orientable simple $3$--manifold, and let $F$ be a component of $\partial M$ of genus at least two. Here \emph{simple} means irreducible, boundary-irreducible, atoroidal, and anannular. By a \emph{slope} on $F$ we mean an essential unoriented simple closed curve in $F$. For a slope $\gamma$, write $M(\gamma)$ for the manifold obtained by attaching a $2$--handle along an annular neighborhood of $\gamma$. For slopes $\alpha$ and $\beta$, let $\D(\alpha,\beta)$ denote the minimum of $|\alpha'\cap\beta'|$ over curves $\alpha'$ and $\beta'$ isotopic in $F$ to $\alpha$ and $\beta$, respectively.

Scharlemann and Wu \cite{SW} studied slopes for which handle addition destroys simplicity, calling such slopes \emph{degenerating}. They conjectured that two degenerating slopes are either disjoint or can be isotoped into a common once-punctured torus \cite[Conjecture~1]{SW}. When both slopes are separating, the once-punctured-torus alternative cannot occur. They also proved that only finitely many isotopy classes of separating degenerating slopes occur on a fixed higher-genus boundary component \cite[Theorem~3.4]{SW}, and that in the mixed reducible--$\partial$-reducible case one has $\D(\alpha,\beta)=0$ whenever one of the slopes is separating \cite[Corollary~4.4]{SW}.

Building on this work, Qiu and Zhang conjectured that $\D(\alpha,\beta)=0$ whenever $\alpha$ and $\beta$ are separating and each of $M(\alpha)$ and $M(\beta)$ is reducible or $\partial$-reducible \cite[Conjecture~1]{QZ}. In the reducible--reducible case, Zhang, Qiu, and Li obtained the bound $\D(\alpha,\beta)\le4$ \cite{ZQL}, and Qiu and Zhang subsequently improved it to $\D(\alpha,\beta)\le2$ \cite{QZ}. Our main theorem sharpens this bound to $\D(\alpha,\beta)=0$, thereby settling the reducible--reducible case of the Qiu--Zhang conjecture.

\begin{theorem}\label{thm:main}
Let $M$ be a compact orientable simple $3$--manifold, and let $F$ be a component of $\partial M$ with $g(F)\ge2$. Suppose that $\alpha,\beta\subset F$ are separating slopes and that both $M(\alpha)$ and $M(\beta)$ are reducible. Then $\D(\alpha,\beta)=0$.
\end{theorem}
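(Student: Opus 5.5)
We argue by contradiction. Suppose $\D(\alpha,\beta)>0$. By the theorem of Qiu and Zhang \cite{QZ} we then have $1\le\D(\alpha,\beta)\le2$. Since $\alpha$ and $\beta$ are both separating in $F$, the algebraic intersection number of $\alpha$ and $\beta$ is zero. Hence, after isotoping them to minimal position, $|\alpha\cap\beta|$ is even, so it suffices to rule out $\D(\alpha,\beta)=2$. In that case $\alpha\cap\beta$ consists of exactly two points. Then $\beta$ meets the two components $F_1,F_2$ of $F-\alpha$ in one essential arc each. Each piece has genus at least one, because $\alpha$ is an essential separating curve. A regular neighbourhood of $\alpha\cup\beta$ is a four-holed sphere $P$ in $F$, and each boundary component of $P$ is either essential in $F$ or bounds a subsurface of positive genus.

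Next we set up the standard intersection-graph machinery. Let $\hat P\subset M(\alpha)$ and $\hat Q\subset M(\beta)$ be reducing spheres, each chosen to meet the cocore of its $2$-handle a minimal number of times. Put $P=\hat P\cap M$, which is a planar surface with boundary slopes $\alpha$, and $Q=\hat Q\cap M$, which is a planar surface with boundary slopes $\beta$. Since $M$ is simple, each of $P$ and $Q$ has at least one boundary component, and $P$ and $Q$ are incompressible and $\partial$-incompressible in $M$. Isotope $P$ and $Q$ so that $P\cap Q$ has no trivial circles and no inessential arcs. Each component of $\partial P$ is a parallel copy of $\alpha$, and each component of $\partial Q$ is a parallel copy of $\beta$. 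Consequently every boundary circle of $P$ meets every boundary circle of $Q$ in exactly $2$ points. We then form the fat-vertexed graphs $G_P\subset\hat P$ and $G_Q\subset\hat Q$. In $G_P$ each vertex has valence $2q$, and in $G_Q$ each vertex has valence $2p$, where $p=|\partial P|$ and $q=|\partial Q|$.

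The argument then proceeds in three steps.

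\textbf{Step 1: combinatorial consequences of simplicity.} Because $M$ is anannular and $\partial$-irreducible, we derive the usual lemmas:
\begin{enumerate}
\item There are no Scharlemann cycles of length $2$ in the appropriate sense.
\item No two edges can be parallel in both $G_P$ and $G_Q$.
\item An extended Scharlemann cycle would give an essential annulus or a compressing disk for $F$.
\end{enumerate}
Since $\alpha$ and $\beta$ are separating, every edge is positive in the parity sense. Orientations alternate, so the sign rule forces every edge to connect vertices of opposite type, or of the same type, in a controlled way. We use this to show that neither graph can contain a Scharlemann cycle whose face yields a disk in $M(\alpha)$ or $M(\beta)$ reducing the intersection.

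\textbf{Step 2: counting.} An Euler characteristic count on the sphere $\hat P$ shows that $G_P$ has a vertex with at most $5$ distinct edge classes, or else a disk face with few sides. Since each vertex has valence only $2q$, a vertex of $G_P$ then carries a family of more than $q/3$ mutually parallel edges. The labels on such a family run through a full block of $\{1,\dots,q\}$. Using that each boundary circle meets each $\beta$-copy in only two points, we extract a Scharlemann cycle in $G_P$, or a great $q$-web. The same reasoning applies to $G_Q$.

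\textbf{Step 3: deriving the contradiction.} A great web in $G_P$ yields a surface in $M(\alpha)$ built from $\hat Q$-pieces, and this gives a once-punctured torus or a disk in $F\cup$ handle containing $\beta$. More directly, a Scharlemann cycle face $D$ of length $2$ has its corners on the two arcs of $\beta$ in $F_1$ and $F_2$. Banding $D$ with the $\alpha$-handle then produces a disk in $M$ whose boundary is a boundary component of the four-holed sphere $P$. This boundary curve is essential in $F$, which contradicts the $\partial$-irreducibility of $M$. If instead the two boundary points together with $D$ give an annulus, that annulus is essential, contradicting anannularity.

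The main obstacle I expect is Step 2. The bound $\D\le2$ of \cite{QZ} already exhausts the standard counting, so the new input must be the exact $2$-point pattern forced by separating slopes. I would first try to show that in $G_P$ all edges at a vertex come in pairs, namely the two points of $\alpha_i\cap\beta_j$, and that these pairs are parallel. I would then use this pairing to halve the effective valence, which should force a Scharlemann cycle of length $2$.
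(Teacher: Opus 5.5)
Your reduction to $\Delta(\alpha,\beta)=2$ via Qiu--Zhang and parity is exactly the paper's first step. Everything after it, however, is a research program rather than a proof, and it breaks down precisely where the new content of the theorem lies.

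As you note yourself, standard intersection-graph counting stops at $\Delta\le2$; that \emph{is} the Qiu--Zhang theorem. Step~2 adds nothing beyond it. A family of more than $q/3$ (even $2q/5$) parallel edges at a vertex of valence $2q$ does not run through a full block of the $q$ labels, so neither a Scharlemann cycle nor a great web follows. The hoped-for ``pairing'' of the two points of $\alpha_i\cap\beta_j$ into parallel edges also has no justification.

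Step~1 is unsound as well. Since $\alpha$ is separating, the two points of $\alpha_i\cap\beta_j$ have opposite intersection signs, so there is no uniform ``every edge is positive'' statement. The sentence about edges joining vertices ``of opposite type, or of the same type'' is vacuous.

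Step~3 asserts a conclusion without giving a mechanism. The corners of a Scharlemann cycle with labels $\{i,i+1\}$ are short arcs of copies of $\beta$ crossing the annulus between $\alpha_i$ and $\alpha_{i+1}$. They are not the long arcs of $\beta$ in $F_1$ and $F_2$, which have equal labels at both ends. Banding with the $\alpha$--handle produces an object in $M(\alpha)$, not a disk in $M$ bounded by some $c_i=\partial X_i$. Nothing you write yields a compressing disk for $F$ or an essential annulus. Using $P$ both for $\Nbd(\alpha\cup\beta)$ and for $\widehat P\cap M$ is where this confusion enters.

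A smaller point: $\partial$-incompressibility of $Q$ does not follow from simplicity of $M$ alone. It needs $\partial$-irreducibility of $M(\beta)$, which the paper gets from the Scharlemann--Wu mixed reducible/$\partial$-reducible result.

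The missing idea is to leave graph combinatorics behind. The paper takes a counterexample with $g(F)$ minimal and shows that each component $X_i$ of $F-\Int\Nbd(\alpha\cup\beta)$ is a once-punctured torus. Otherwise, attach a $2$--handle along a separating curve $\delta\subset X_i$ cutting off a once-punctured torus, chosen by Scharlemann--Wu finiteness so that $M(\delta)$ is still simple. This lowers $g(F)$ while keeping $\Delta=2$ and both fillings reducible. The paper similarly makes $F$ the only non-toral boundary component.

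It then puts a taut sutured structure on $M$ with sutures $\alpha,c_1,\dots,c_4$. Each of $R_\pm$ is a pair of pants plus two once-punctured tori, of norm $3$. Tautness follows from boundary-irreducibility and anannularity of $M$ together with a parity count of boundary components.

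Finally it applies Taylor's handle-addition theorem with $b=\alpha$ and $Q=\widehat Q\cap M$, where $\widehat Q$ is a minimal reducing sphere of $M(\beta)$. Alternatives (1)--(3) are excluded by incompressibility and $\partial$-incompressibility of $Q$, anannularity of $M$, and reducibility of $M(\alpha)$. Alternative (4) reads $4q-4\ge4q$, a contradiction.
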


Together with the standard bound for pairwise disjoint separating curves on $F$, Theorem~\ref{thm:main} gives the following consequence.

\begin{corollary}\label{cor:number}
Under the hypotheses on $M$ and $F$ in Theorem~\ref{thm:main}, there are at most $2g(F)-3$ isotopy classes of separating slopes $\gamma\subset F$ for which $M(\gamma)$ is reducible.
\end{corollary}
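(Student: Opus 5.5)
The statement to prove is Corollary~\ref{cor:number}. I will combine Theorem~\ref{thm:main} with a count of pairwise disjoint, pairwise non-isotopic separating essential curves on a closed orientable surface of genus $g=g(F)\ge2$.

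\textbf{Step 1: realize the slopes disjointly.} Let $\gamma_1,\dots,\gamma_k$ be pairwise non-isotopic separating slopes on $F$ with each $M(\gamma_i)$ reducible. By Theorem~\ref{thm:main}, $\D(\gamma_i,\gamma_j)=0$ for all $i\neq j$. From pairwise zero geometric intersection I pass to simultaneous disjointness in the standard way. Fix a hyperbolic metric on $F$ and replace each $\gamma_i$ by its geodesic representative. Geodesics realize minimal intersection, and distinct non-isotopic simple closed geodesics with $\D=0$ are disjoint. So the $\gamma_i$ can be taken pairwise disjoint, and they remain pairwise non-isotopic and separating.

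\textbf{Step 2: count disjoint separating curves.} Cut $F$ along the $k$ disjoint separating curves. Since each curve separates, the dual graph is a tree with $k$ edges and $k+1$ vertices. Each complementary component $P$ is a compact surface with $b_P\ge1$ boundary curves. Its Euler characteristic satisfies $\chi(P)\le -1$: a disk would make some $\gamma_i$ inessential, and an annulus would make two $\gamma_i$ isotopic. Since $\sum_P \chi(P)=\chi(F)=2-2g$, this gives $k+1\le 2g-2$, i.e.\ $k\le 2g-3$.

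\textbf{Step 3: check sharpness of the count.} One can also verify that the bound is attained. A component with $\chi(P)=-1$ is either a pair of pants or a once-holed torus. A pair of pants whose three boundary curves are all separating does occur, for example as the middle piece of a tree-like decomposition. So $2g-3$ is the correct bound and no refinement of the count is needed. (Sharpness is only a sanity check; the corollary needs just $k\le 2g-3$.)

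\textbf{Main obstacle.} The one genuinely delicate point is Step~1, passing from pairwise $\D=0$ to simultaneous disjointness. This is handled by the geodesic argument, or equivalently by the bigon criterion applied iteratively. Everything else is an Euler characteristic count.
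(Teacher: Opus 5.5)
Your proposal is correct and follows the paper's route. The paper derives the corollary in one line from Theorem~\ref{thm:main} together with the standard bound on pairwise disjoint, pairwise non-isotopic separating curves on a closed genus-$g$ surface. Your geodesic realization and your Euler characteristic count (each of the $k+1$ complementary pieces has $\chi\le-1$, so $k\le 2g-3$) simply make that standard bound explicit.
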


Beyond the reducible case, several other exceptional outcomes of handle addition have been studied. $\partial$-reducible additions were treated by Li, Qiu, and Zhang \cite{LQZ} and by Lou and Zhang \cite{LZ}. Eudave-Mu\~noz \cite{EM} considered annular and toroidal outcomes, while Chan-Palomo \cite{CP22,CP23} studied toroidal additions and mixed toroidal--reducible pairs.

The paper is organized as follows. Section~2 collects the preliminary results on reducible handle additions, sutured manifolds, and Taylor's handle-addition theorem. Since separating slopes have even geometric intersection number, the Qiu--Zhang bound reduces the proof to excluding the case $\D(\alpha,\beta)=2$. Section~3 shows that a counterexample with $g(F)$ minimal must have $g(F)=4$ and may be chosen so that $F$ is the only non-toral boundary component. Section~4 constructs a taut sutured structure in this genus-four configuration and applies Taylor's theorem to obtain a contradiction.

\section{Preliminaries}

Throughout the paper, all manifolds and surfaces are compact and orientable, and all $3$--manifolds are connected, unless otherwise stated.

\subsection{Reducible handle additions and elementary facts}

The main input for the genus reduction is the distance bound of Qiu and Zhang \cite[Theorem~1]{QZ}.

\begin{theorem}[Qiu--Zhang]\label{thm:QZ}
Let $M$ be a simple $3$--manifold, and let $F$ be a boundary component of $\partial M$ of genus at least two. If $\alpha,\beta\subset F$ are separating slopes and both $M(\alpha)$ and $M(\beta)$ are reducible, then $\D(\alpha,\beta)\le2$.
\end{theorem}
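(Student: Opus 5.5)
The plan is the standard intersection-graph argument, in the form used for Dehn fillings by Gordon--Luecke and adapted to handle additions by Scharlemann--Wu. Since $\alpha$ and $\beta$ are separating, $\D(\alpha,\beta)$ is even. It therefore suffices to assume $\D(\alpha,\beta)\ge4$ and derive a contradiction.

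\textbf{Setup.} Choose a reducing sphere $\widehat P\subset M(\alpha)$ meeting the attached $2$--handle in a minimal number $p$ of cocores. Because $M$ is irreducible, $p\ge1$. Set $P=\widehat P\cap M$: it is a planar surface, each of whose boundary components is parallel to $\alpha$. Define $Q$ and $q$ in the same way for $\beta$. Minimality of $p$ and $q$, together with the simplicity of $M$, shows that $P$ and $Q$ are incompressible and $\partial$-incompressible in $M$.

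Isotope $P$ and $Q$ so that every component of $\partial P$ meets every component of $\partial Q$ in exactly $\D$ points, and so that $|P\cap Q|$ is minimal. Then no arc of $P\cap Q$ is inessential in either surface, and no circle component bounds a disk in either surface. This gives two graphs: $G_P\subset\widehat P$, with $p$ fat vertices and edges the arcs of $P\cap Q$, and $G_Q\subset\widehat Q$ defined symmetrically. Each vertex of $G_P$ has valence $q\D$, and each vertex of $G_Q$ has valence $p\D$.

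\textbf{Counting and Scharlemann cycles.} Euler characteristic on the sphere $\widehat P$ shows that $G_P$, after collapsing parallel edges to a reduced graph, has a vertex of reduced valence at most $5$. Since the valence is $q\D\ge4q$, some family of mutually parallel edges has at least $4q/5$ members, and a more careful count pushes this up to about $q$ or more. For such long parallel families I would establish the usual structural lemmas:
\begin{itemize}
\item a parallel family of length greater than $q$ gives the same label pair twice;
\item a Scharlemann cycle on labels $\{i,i+1\}$ produces, after banding along the face and the $2$--handle, either a sphere in $M(\beta)$ meeting the handle fewer times (contradicting the minimality of $q$) or an essential annulus or torus in $M$ (contradicting simplicity).
\end{itemize}

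The separating hypothesis enters at this point. Because $\beta$ separates $F$, the arcs of $\beta\setminus\alpha$ alternate between the two sides of $\alpha$. This forces a strong parity rule on the labels around each vertex, sharper than in the nonseparating case, and it rules out the configurations that would otherwise survive when $\D=4$.

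\textbf{Main obstacle.} The hardest step is the final combinatorial case analysis. One must show that, when $\D\ge4$, either $G_P$ or $G_Q$ contains a Scharlemann cycle or an extended Scharlemann cycle whose surgery gives one of the contradictions above. The cases with few vertices ($p$ or $q$ equal to $1$ or $2$) need separate treatment, because there the Euler characteristic bound is weakest. My first attempt would be to exploit the label parity forced by separating slopes, together with the fact that faces of $G_P$ lie on definite sides of $F\setminus\alpha$. From these I would show that any great web in $G_P$ contains a Scharlemann cycle of length $2$, and that such a cycle already yields an essential annulus in $M$.
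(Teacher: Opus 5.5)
The paper does not prove this statement; it quotes it as \cite[Theorem~1]{QZ}. Measured against what a proof would need, your proposal has a genuine gap. It sets up the intersection-graph framework and then explicitly defers the only step with real content. At a vertex of $G_P$ with valence $q\D=4q$ and reduced valence at most $5$, the Euler characteristic count gives a parallel family of only about $4q/5$ edges. That is shorter than the length ($>q$) at which your own lemma about repeated label pairs applies, and ``a more careful count pushes this up to about $q$'' is not an argument. For $\D\ge6$ the crude count already exceeds $q$, so the whole difficulty is the case $\D=4$. Since $\D\le4$ was already known \cite{ZQL}, the case you leave open is exactly the content of \cite{QZ}. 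Your fallback, a great web in $G_P$, is not free either. In the Dehn-filling setting its existence is a separate theorem of Gordon--Luecke, coming from an analysis of which ``types'' the other graph represents, and you give no analogue for $2$--handle additions. Without one of these routes, nothing in your proposal actually produces a Scharlemann cycle.

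Second, the torus-case lemmas you list cannot simply be quoted, because the combinatorics differs for separating slopes on a surface of genus at least two. Since $\alpha$ is null-homologous, $\alpha$ and $\beta$ in minimal position have $\D/2$ positive and $\D/2$ negative crossings. So the labels around a fat vertex do not run cyclically $1,\dots,q,1,\dots,q$. They run $1,\dots,q$ across a crossing of one sign and $q,\dots,1$ across a crossing of the other sign, so equal labels can be adjacent. The Gordon--Luecke parity rule also fails in its usual form, since its proof uses that all crossings on a torus have the same sign. Three things therefore have to be reproved with a corrected rule, which you allude to but never state: ``a parallel family of length $>q$ repeats a label pair'', ``a great web contains a Scharlemann cycle'', and the sign condition that turns a Scharlemann cycle into a punctured lens space.

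There is also a smaller gap in the setup. Minimality of $p$ gives incompressibility of $P$, but $\partial$-incompressibility does not follow from simplicity of $M$. A $\partial$-compressing disk whose arc runs essentially through one of the two components of $F$ cut along $\alpha$ extends across the $2$--handle to a compressing disk for $\partial M(\alpha)$. So you need $M(\alpha)$ and $M(\beta)$ to be $\partial$-irreducible. For $\D>0$ this follows from Scharlemann--Wu, as in Corollary~\ref{cor:filled-boundary-irred}, and it is what lets you exclude trivial loops before counting valences.
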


The following results of Scharlemann and Wu \cite[Theorem~3.4 and Corollary~4.4]{SW} will also be used.

\begin{lemma}[Scharlemann--Wu]\label{lem:SW}
Let $M$ be a simple $3$--manifold, and let $F$ be a boundary component of $\partial M$ of genus at least two. Then the following hold.
\begin{enumerate}
\item There are only finitely many isotopy classes of separating degenerating slopes on $F$.
\item Suppose that $\alpha,\beta\subset F$ are slopes, $M(\alpha)$ is reducible, and $M(\beta)$ is $\partial$-reducible. If either $\alpha$ or $\beta$ is separating, then $\D(\alpha,\beta)=0$.
\end{enumerate}
\end{lemma}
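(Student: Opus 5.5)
Both parts are quoted from \cite{SW}, and here we only outline how we would reprove them, taking as the only input the hypothesis that $M$ is simple. The common first step is to turn each exceptional filling into a planar surface in $M$.

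\textbf{Planar surfaces.} Suppose $M(\gamma)$ is reducible, respectively $\partial$-reducible. Choose a reducing sphere, respectively a $\partial$-reducing disk, $\widehat P\subset M(\gamma)$ meeting the cocore of the attached $2$--handle in the minimal number of points. Then $P=\widehat P\cap M$ is a planar surface, and:
\begin{itemize}
\item its boundary components on $F$ are parallel copies of $\gamma$;
\item it is incompressible, because $M$ is irreducible and minimality excludes compressing disks;
\item it is $\partial$-incompressible, because $M$ is $\partial$-irreducible and anannular.
\end{itemize}

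\textbf{Part (2).} Let $P$ come from $M(\alpha)$ (reducible) and $Q$ from $M(\beta)$ ($\partial$-reducible). Isotope them so that $P\cap Q$ consists of arcs essential in both surfaces and no circles. Suppose $\D(\alpha,\beta)>0$, and form the intersection graphs $G_P\subset\widehat P$ and $G_Q\subset\widehat Q$ in the usual way. Since one of $\alpha,\beta$ is separating, each of its copies meets each copy of the other slope in an even number of points. Orient the boundary components; the parity condition then forces the signs at consecutive intersection points along a fixed boundary curve to alternate in a controlled pattern. A standard Euler characteristic count on the sphere $\widehat P$ then produces one of two configurations in $G_P$:
\begin{itemize}
\item a Scharlemann cycle, which yields a lens-space summand or a compressing disk for $F$, contradicting irreducibility or incompressibility;
\item a great $x$-cycle, which yields an essential annulus in $M$, contradicting anannularity.
\end{itemize}
Hence $\D(\alpha,\beta)=0$. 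The disk $\widehat Q$ also has free boundary on $\partial M(\beta)$; its arcs are handled by the same counting, adjusted for the boundary edges.

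\textbf{Part (1).} By the Jaco handle addition lemma, $M(\gamma)$ is irreducible and $\partial$-irreducible whenever $F-\gamma$ is incompressible. So a degenerating separating slope either makes $F-\gamma$ compressible in $M$, or produces an essential sphere, disk, annulus or torus in $M(\gamma)$ whose trace in $M$ is an essential planar surface with boundary slope $\gamma$. We would fix a taut sutured manifold structure on $M$ whose sutures lie on $F$ and are chosen generically with respect to a fixed filling pair of curves on $F$. Scharlemann's sutured manifold theory then shows that a slope meeting the sutures sufficiently many times gives a taut filled manifold, which cannot be degenerate. This bounds the intersection of every degenerating separating slope with a fixed filling collection, and only finitely many isotopy classes satisfy such a bound.

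\textbf{Main obstacle.} The hardest step is extracting the bounded-intersection statement in (1) uniformly over all degeneracy types, particularly the toroidal and annular ones. There one must verify that the tautness conclusion survives, rather than only irreducibility and $\partial$-irreducibility. We would attempt this first by reducing the annular and toroidal cases to the reducible and $\partial$-reducible cases via cut-and-paste with the sutured hierarchy.
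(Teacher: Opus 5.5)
The paper gives no proof of this lemma. It is imported from \cite{SW} (Theorem~3.4 and Corollary~4.4 there), and citing it is all that is needed. Read as a standalone reproof, however, your outline has genuine gaps in both parts.

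\textbf{Part~(1).} The opening step is false as stated. Since $M$ is simple, $F$ is incompressible, so $F-\gamma$ is incompressible for \emph{every} slope $\gamma$. Your version of Jaco's lemma would then make every filling $M(\gamma)$ irreducible and $\partial$-irreducible. That is false, and it would render Lemma~\ref{lem:SW}(2), Theorem~\ref{thm:main} and Corollary~\ref{cor:number} vacuous. Jaco's handle addition lemma requires $\partial M$ to be compressible, which is exactly the case excluded here, so it gives you nothing.

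The sutured-manifold step appeals to a statement that is not available. Tautness results such as Theorem~\ref{thm:Taylor} concern attaching the $2$--handle along a \emph{suture} $b$. Their alternatives are governed by the inequality $-2\chi(Q)+|Q\cap\gamma|\ge2|Q\cap b|$ for the degenerating surface $Q$. Nothing says that a filling slope crossing a fixed family of sutures many times yields a taut filled manifold. Even granting tautness, it gives irreducibility and incompressibility of $R_\pm$, not atoroidality or anannularity. So the cases you yourself flag as the main obstacle are untouched. Also, the trace of an essential torus is a punctured torus, not a planar surface. Part~(1) is therefore not established.

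\textbf{Part~(2).} The Euler-characteristic count is asserted rather than carried out, and the mechanism it needs is missing. Counting arguments that force Scharlemann cycles or great $x$-cycles rest on the parity rule. That rule holds on a torus because all points of $\alpha_i\cap\beta_j$ have the same sign. Here one slope is separating, so the algebraic intersection number is zero and each $\alpha_i\cap\beta_j$ contains points of both signs. Consecutive crossings of the band of parallel copies go in opposite directions, so labels around the vertices of one graph read back and forth $1,\dots,q,q,\dots,1,\dots$. The sign of an edge endpoint is then no longer determined by its vertex and its label. ``Alternating in a controlled pattern'' does not replace the parity rule. The claimed topological consequences of Scharlemann cycles and great $x$-cycles in a handle-addition setting are likewise unverified.

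A sanity check shows something essential is absent. Your argument never uses that $\widehat Q$ is a disk rather than a sphere; the boundary edges appear only as an extra complication. If it worked, it would equally give $\D=0$ for two reducing spheres, i.e.\ Theorem~\ref{thm:main}. Yet the best bound previously available for that case was $\D\le2$ (Theorem~\ref{thm:QZ}), and this paper needs a genus reduction and Taylor's theorem to reach $\D=0$.

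Finally, your justification of $\partial$-incompressibility is insufficient when $g(F)\ge2$. The outer arc of a $\partial$-compressing disk can run essentially through a positive-genus component of $F-\partial P$. Neither $\partial$-irreducibility nor anannularity of $M$ excludes this. The standard argument, which the paper borrows from \cite{QZ} in Section~4 together with Corollary~\ref{cor:filled-boundary-irred}, uses $\partial$-irreducibility of the filled manifold. You have not established that for $M(\alpha)$.
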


This statement immediately implies the following.

\begin{corollary}\label{cor:filled-boundary-irred}
Let $M$ and $F$ be as in Lemma~\ref{lem:SW}, and let $\alpha,\beta\subset F$ be separating slopes such that both $M(\alpha)$ and $M(\beta)$ are reducible. If $\D(\alpha,\beta)>0$, then both $M(\alpha)$ and $M(\beta)$ are boundary-irreducible.
\end{corollary}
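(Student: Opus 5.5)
We argue by contradiction, applying Lemma~\ref{lem:SW}(2) with the roles of the slopes arranged suitably. Suppose $\D(\alpha,\beta)>0$ and that, say, $M(\beta)$ is $\partial$-reducible; the case of $M(\alpha)$ is symmetric after interchanging $\alpha$ and $\beta$. The hypotheses of Lemma~\ref{lem:SW}(2) then hold for the pair $(\alpha,\beta)$:
\begin{enumerate}
\item $M$ is simple and $F$ is a boundary component of genus at least two;
\item $M(\alpha)$ is reducible;
\item $M(\beta)$ is $\partial$-reducible, by our assumption;
\item $\alpha$ (indeed both slopes) is separating.
\end{enumerate}
Hence Lemma~\ref{lem:SW}(2) gives $\D(\alpha,\beta)=0$, contradicting $\D(\alpha,\beta)>0$. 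Therefore $M(\beta)$ is boundary-irreducible. The same argument, applied to the pair $(\beta,\alpha)$ using that $M(\beta)$ is reducible, shows that $M(\alpha)$ is boundary-irreducible as well.

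The only point requiring a moment's care is the convention behind \emph{boundary-irreducible}. In $M(\gamma)$ the surface $F$ is replaced by the surface obtained by compressing $F$ along $\gamma$. Since $\gamma$ is separating and essential, this produces two closed surfaces, each of positive genus, because an essential separating curve cannot bound a disk on either side. So no sphere component appears from $F$. Any other boundary components of $M(\gamma)$ are boundary components of $M$, none of which is a sphere since $M$ is irreducible and not a ball. Consequently $\partial$-reducibility of $M(\gamma)$ has its usual meaning, namely the existence of an essential compressing disk for some boundary component, and this is exactly the notion used in Lemma~\ref{lem:SW}(2). Once the conventions are aligned in this way there is no real obstacle: the statement is a direct specialization of the Scharlemann--Wu result.
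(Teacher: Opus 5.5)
Your proof is correct and is essentially the paper's argument: the paper simply notes that the corollary follows immediately from Lemma~\ref{lem:SW}(2), applied with the reducible slope and the putatively $\partial$-reducible slope in either order, exactly as you do. Your extra paragraph on the meaning of boundary-irreducibility is accurate but not needed for the deduction.
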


The following elementary lemma shows that reducibility is preserved under an additional $2$--handle attachment along a boundary component of genus at least two.

\begin{lemma}\label{lem:reducibility-preserved}
Let $N$ be a reducible $3$--manifold, let $G\subset\partial N$ be a boundary component of genus at least two, and let $\delta$ be a slope on $G$. Then $N(\delta)$ is reducible.
\end{lemma}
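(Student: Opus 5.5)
We take a reducing sphere $S\subset N$, so $S$ does not bound a ball in $N$. We will show that after isotopy $S$ is disjoint from the attached $2$--handle and still fails to bound a ball in $N(\delta)=N\cup_{A} H$. Here $A$ is an annular neighborhood of $\delta$ in $G$ and $H=D^2\times I$. We may assume $S\subset\Int N$, so $S\subset N(\delta)$. The remaining task is to show that $S$ does not bound a ball $B$ in $N(\delta)$.

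Suppose $S$ does bound such a ball $B$. If $B$ misses the cocore of $H$, then $B$ can be pushed off $H$ into $N$, where it would be a ball bounded by $S$, a contradiction. Otherwise $B\cap N$ is a compact submanifold of $N$ that contains the boundary component $G$, because $G$ is connected and meets the handle. More precisely, $S$ separates $N(\delta)$, and the side $B$ contains the handle $H$. The component of $N\setminus S$ on that side therefore contains $G$ and is disjoint from the other side.

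The key point is that $B$ then contains the closed surface $G'$ obtained from $G$ by the handle addition. This $G'$ is either a boundary component of $N(\delta)$, or two components if $\delta$ is separating. Since $g(G)\ge2$, each such component has genus $\ge1$: if $\delta$ is nonseparating the genus is $g(G)-1\ge1$, and if $\delta$ is separating each side has genus $\ge1$ because $\delta$ is essential. But a ball contains no boundary surface of $N(\delta)$ in its interior, and its only boundary is the sphere $S$. Hence a side of $S$ containing a positive-genus boundary component of $N(\delta)$ cannot be a ball.

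This gives a contradiction, so $S$ is a reducing sphere in $N(\delta)$. The one step that needs care is the bookkeeping that the side of $S$ containing $H$ also contains $\partial N(\delta)\cap G'$. This follows because $H\cup G$ is connected and disjoint from $S$. No step here is a serious obstacle. The genus hypothesis is used only to guarantee that the new boundary has no sphere components, which is exactly what could otherwise let $S$ bound a ball, as happens when attaching a $2$--handle to a torus.
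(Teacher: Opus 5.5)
Your proof is correct and is essentially the paper's argument: the side of $S$ not containing the handle is unchanged and so is not a ball, while the side containing $H$ also contains the new boundary surface coming from $G$ and so cannot be a ball; arguing from a hypothetical ball $B$ just lets you skip the paper's separate treatment of a nonseparating $S$. One correction to your closing remark: since $\partial B=S\subset\Int N(\delta)$, a ball bounded by $S$ contains no point of $\partial N(\delta)$ at all, so the genus hypothesis is not what stops $S$ from bounding a ball (for a torus $G$ the sphere $S$ still bounds no ball, though it may become parallel to a boundary sphere), and your appeal to positive genus is harmless but unnecessary.
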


\begin{proof}
Let $S\subset\Int N$ be a reducing sphere. The attached $2$--handle is disjoint from $S$. If $S$ is nonseparating, then it remains nonseparating in $N(\delta)$ and hence remains a reducing sphere. If $S$ is separating, let $X$ and $Y$ be the closures of the components of $N-S$, with $G\subset\partial X$. The component $Y$ is unchanged and is not a $3$--ball. Since $g(G)\ge2$ and $\delta$ is essential, compressing $G$ along $\delta$ leaves a positive-genus boundary component of $X(\delta)$. Thus $X(\delta)$ is not a $3$--ball either, and $S$ remains a reducing sphere in $N(\delta)$.
\end{proof}

\subsection{Sutured manifolds and Taylor's handle-addition theorem}

We adopt Gabai's sutured-manifold terminology \cite{Gabai}, using the notation of Taylor \cite[Section~3]{Taylor}. A \emph{sutured manifold} is a pair $(N,\gamma)$ satisfying the following conditions. The manifold $N$ has nonempty boundary, and $\gamma$ is a finite collection of pairwise disjoint oriented simple closed curves in the non-toral components of $\partial N$, called the \emph{sutures}. Let $A(\gamma)\subset\partial N$ be a union of pairwise disjoint annular neighborhoods of the sutures, let $T(\gamma)$ be the union of the torus components of $\partial N$, and set
\[
R(\gamma)=\cl\bigl(\partial N-(A(\gamma)\cup T(\gamma))\bigr).
\]
The surface $R(\gamma)$ is required to admit a decomposition
\[
R(\gamma)=R_-(\gamma)\sqcup R_+(\gamma)
\]
so that every component of $A(\gamma)$ is adjacent to one component of $R_-(\gamma)$ and one component of $R_+(\gamma)$. We orient $R_+(\gamma)$ outward and $R_-(\gamma)$ inward; the induced boundary orientations agree with the orientations of the corresponding sutures.

For an oriented surface $S=\bigsqcup_jS_j$, with each $S_j$ connected, set
\[
x(S)=\sum_j\max\{0,-\chi(S_j)\}.
\]
The sutured manifold $(N,\gamma)$ is \emph{taut} if $N$ is irreducible, every component of $T(\gamma)$ is incompressible, and each of $R_+(\gamma)$ and $R_-(\gamma)$ is incompressible and minimizes $x$ among oriented surfaces with the same oriented boundary and representing the same relative homology class.

A properly embedded surface $Q\subset N$ is a \emph{parameterizing surface} if
\begin{enumerate}
\item[(P1)] $\partial Q\cap A(\gamma)$ consists of spanning arcs, each intersecting the corresponding core suture once;
\item[(P2)] no component of $Q$ is a sphere or a disk disjoint from $\gamma$.
\end{enumerate}

Let $b$ be a component of $\gamma$. Write $N[b]$ for the manifold obtained by attaching a $2$--handle along $b$, and let $\kappa_b\subset N[b]$ denote the cocore arc of this handle. Following Taylor \cite[Introduction]{Taylor}, a \emph{$b$--boundary-compressing disk} for $Q$ is an embedded disk $D\subset N$ with $\partial D=a\cup d$, where $a\subset Q$ is an arc and $d\subset b$ is a subarc, and with $\Int D\cap(Q\cup\partial N)=\varnothing$. The arc $a$ is not required to be essential in $Q$. The following special case of Taylor's handle-addition theorem \cite[Theorem~9.5]{Taylor} will be used.

\begin{theorem}[Taylor]\label{thm:Taylor}
Let $(N,\gamma)$ be a taut sutured manifold, and let $b$ be a component of $\gamma$ whose two adjacent components of $R(\gamma)$ are thrice-punctured spheres. Let $Q\subset N$ be a parameterizing surface with $|Q\cap b|\ge1$, and suppose that the intersection of $Q$ with each of the two components of $R(\gamma)$ adjacent to $b$ contains no inessential arcs or circles. Let $\kappa_b\subset N[b]$ be the cocore arc of the attached $2$--handle. Then at least one of the following holds:
\begin{enumerate}
\item $Q$ has a compressing disk or a $b$--boundary-compressing disk;
\item $(N[b],\kappa_b)=(M'_0,\kappa'_0)\#(M'_1,\kappa'_1)$, where $M'_1$ is a lens space and $\kappa'_1$ is a core of a genus-one Heegaard splitting of $M'_1$;
\item $(N[b],\gamma-b)$ is taut;
\item $-2\chi(Q)+|Q\cap\gamma|\ge2|Q\cap b|$.
\end{enumerate}
\end{theorem}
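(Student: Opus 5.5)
The plan is to obtain Theorem~\ref{thm:Taylor} by specializing \cite[Theorem~9.5]{Taylor} rather than reproving it. Taylor's general statement concerns a taut sutured manifold $(N,\gamma)$, a suture $b$, and a parameterizing surface $Q$. Its conclusion is one of four alternatives: $Q$ is compressible or $b$--boundary-compressible; $(N[b],\kappa_b)$ has a lens space summand in which $\kappa_b$ is a core; $(N[b],\gamma-b)$ is taut; or an Euler characteristic inequality holds. In general that inequality involves the sutured manifold index $I(Q)=-2\chi(Q)+|Q\cap\gamma|$ together with correction terms. These terms depend on $Q\cap R(\gamma)$ near $b$ and on whether $b$ is ``nice'' in Taylor's sense, for instance whether an annulus of $R(\gamma)$ or a component of $R(\gamma)$ adjacent to $b$ is a disk or annulus. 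So the proof consists of verifying Taylor's standing hypotheses in our setting and checking that the correction terms vanish.

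The steps, in order, are as follows.
\begin{enumerate}
\item I will check that $N$, $\gamma$ and $b$ satisfy Taylor's requirements. The two components of $R(\gamma)$ adjacent to $b$ are thrice-punctured spheres, so neither is a disk or an annulus. Hence $b$ is not parallel in $R(\gamma)$ to another suture and does not bound a disk. This excludes the degenerate situations, such as $b$ lying on a torus-like piece or $R(\gamma)$ being compressible after attaching, that Taylor treats separately.
\item I will observe that each thrice-punctured sphere has $x=1$ and is incompressible. Tautness of $(N,\gamma)$ therefore gives the $x$-minimality Taylor needs on the $b$--side of $R(\gamma)$.
\item I will check that $Q$ satisfies Taylor's normalization: conditions (P1) and (P2), together with the absence of inessential arcs and circles in $Q\cap R(\gamma)$ near $b$. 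This is exactly what Taylor needs in order to compute the index of $Q$ without the extra boundary terms, which count inessential intersections or $\partial$--parallel arcs. With these terms zero, Taylor's inequality reduces to $I(Q)\ge 2|Q\cap b|$, which is alternative (4).
\item The condition $|Q\cap b|\ge1$ is Taylor's requirement that $Q$ genuinely meet the suture being filled.
\end{enumerate}
After these checks, the remaining three alternatives of Taylor's theorem transcribe directly into (1)--(3).

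The main obstacle is matching conventions. Taylor's Theorem~9.5 is formulated with a general notion of ``$b$--boundary-compressing disk'' and with an index that counts intersections with sutures weighted by arcs. One must confirm that our simplified definitions agree with his after the hypotheses are imposed; note in particular that the arc $a$ is not required to be essential. I would handle this by tracing through Taylor's definition of the relevant ``$\nu$'' or correction term. I expect to show that its contributions come only from disk or annulus components of $R(\gamma)$ adjacent to $b$, or from inessential arcs of $Q\cap R(\gamma)$, and both are excluded by hypothesis. If some residual term remains, I would absorb it by first isotoping $Q$ to remove trivial intersections, which is possible without creating compressions.
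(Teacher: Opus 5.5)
Your approach is essentially the paper's: the paper gives no independent proof and simply invokes this statement as a special case of Taylor's Theorem~9.5, so reducing it to a check of Taylor's hypotheses is what is intended. Your descriptions of Taylor's correction terms are guesses that must be confirmed against his actual statement, and you should drop the fallback of re-isotoping $Q$, since conclusion~(4) concerns the given $Q$ and the hypotheses already rule out inessential arcs and circles in the components of $R(\gamma)$ adjacent to $b$.
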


\begin{remark}\label{rmk:Taylor-summand}
In alternative~(2), the symbol $\#$ denotes the connected sum of pairs. Thus there is a separating sphere $S\subset N[b]$, transverse to $\kappa_b$, with $|S\cap\kappa_b|=2$. It is important to note that the lens-space/core pair is a nontrivial summand. In particular, the corresponding punctured summand is not a $3$--ball containing a boundary-parallel arc of $\kappa_b$; see \cite[Lemma~8.3 and its proof]{Taylor}.
\end{remark}

\section{Reduction to genus four}

Suppose, for contradiction, that Theorem~\ref{thm:main} is false, and set
\[
\begin{aligned}
\mathcal C=\bigl\{(M,F,\alpha,\beta)\ \bigm|\ {}&
M\text{ is simple},\\
&F\text{ is a component of }\partial M,\quad g(F)\ge2,\\
&\alpha,\beta\subset F\text{ are separating slopes},\\
&M(\alpha)\text{ and }M(\beta)\text{ are reducible},\quad
\D(\alpha,\beta)>0
\bigr\}.
\end{aligned}
\]
Choose a counterexample $(M,F,\alpha,\beta)\in\mathcal C$ for which $g(F)$ is minimal.

\begin{proposition}\label{prop:genus4}
For the chosen quadruple, $\D(\alpha,\beta)=2$ and $g(F)=4$. Moreover, after isotopy,
\[
F-\Int\Nbd_F(\alpha\cup\beta)=X_1\sqcup X_2\sqcup X_3\sqcup X_4,
\]
where each $X_i$ is a once-punctured torus.
\end{proposition}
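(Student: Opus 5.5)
We first show $\D(\alpha,\beta)=2$. Separating slopes have even algebraic, and in fact even geometric, intersection number: $\beta$ crosses $\alpha$ alternately between the two sides of $\alpha$. Since $\D(\alpha,\beta)>0$, Theorem~\ref{thm:QZ} forces $\D(\alpha,\beta)=2$. Put $\alpha$ and $\beta$ in minimal position, so they meet in two points. Then $\alpha\cup\beta$ is a $4$-valent graph with two vertices and four edges, and a regular neighborhood $\Nbd_F(\alpha\cup\beta)$ is a sphere with four holes. Minimality of intersection means no complementary region is a disk or a bigon. Both $\alpha$ and $\beta$ are separating, so the four boundary curves of the neighborhood lie in four distinct complementary regions $X_1,\dots,X_4$. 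The two "opposite corner" pairs are exactly the arcs of $\alpha$ and $\beta$ cutting the neighborhood, and each $X_i$ is a compact surface with one boundary component and genus $g_i\ge1$. No $X_i$ is a disk, since then $\alpha$ or $\beta$ would bound a disk or the two curves could be isotoped to reduce intersection. Hence $g(F)=g_1+g_2+g_3+g_4\ge4$, and it remains to show each $g_i=1$.

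For the reduction, suppose some $X_i$, say $X_1$, has genus $\ge2$. Choose a separating slope $\delta\subset\Int X_1$ cutting off a genus-one subsurface, or more generally a slope $\delta$ in $X_1$ lowering genus. Form $M'=M(\delta)$, whose new boundary component $F'$ is obtained from $F$ by compressing along $\delta$. We keep the component containing $\alpha\cup\beta$; if $\delta$ is separating, the other piece becomes a new boundary component of small genus, possibly a torus. The curves $\alpha,\beta$ survive in $F'$ as separating slopes with $\D(\alpha,\beta)=2$ still, because $\Nbd(\alpha\cup\beta)$ is untouched and the complementary regions keep positive genus. By Lemma~\ref{lem:reducibility-preserved} applied to $N=M(\alpha)$ and $G$ the component containing $\delta$, the manifold $M'(\alpha)=M(\alpha)(\delta)$ is reducible, and likewise $M'(\beta)$. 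We need $g\ge2$ here, which holds since $\alpha$ cuts $F$ into pieces of genus $\ge2$ containing $X_1$. So $(M',F',\alpha,\beta)\in\mathcal C$ with smaller genus, contradicting minimality, provided $M'$ is simple.

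The main obstacle is choosing $\delta$ so that $M(\delta)$ remains simple. I would use Lemma~\ref{lem:SW}(1): only finitely many separating slopes are degenerating. Inside $X_1$, which has genus $\ge2$, there are infinitely many separating slopes cutting off a once-punctured torus, for example images under powers of a pseudo-Anosov supported in $X_1$. Hence we can pick a nondegenerating one, and then $M(\delta)$ is simple. The boundary pieces of the result are $F'$ of genus $g(F)-1\ge3$ and a torus, and the torus is allowed. This gives $g_i=1$ for all $i$, so $g(F)=4$. A similar trick handles the "only non-toral boundary component" claim made later, but that is not part of this statement.
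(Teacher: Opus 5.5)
Your proposal is correct and follows the paper's proof essentially step for step: parity of intersection plus Theorem~\ref{thm:QZ} gives $\Delta(\alpha,\beta)=2$, and the complement of the four-holed sphere $\Nbd_F(\alpha\cup\beta)$ consists of four one-holed surfaces of positive genus. If some $X_1$ has genus at least two, you attach a handle along a nondegenerating separating slope $\delta\subset X_1$ bounding a once-punctured torus (available by Lemma~\ref{lem:SW}(1)), and commutation of the handle additions together with Lemma~\ref{lem:reducibility-preserved} yields a counterexample of smaller genus. The one point you leave implicit is that $\delta$ remains an essential slope on the relevant component of $\partial M(\alpha)$, and likewise of $\partial M(\beta)$; this holds because both sides of $\delta$ there have positive genus.
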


\begin{proof}
Since $\alpha$ is separating, $\D(\alpha,\beta)$ is even. As $\D(\alpha,\beta)>0$, Theorem~\ref{thm:QZ} therefore gives $\D(\alpha,\beta)=2$. Without loss of generality, assume that $|\alpha\cap\beta|=2$, and set $P=\Nbd_F(\alpha\cup\beta)$. Then $P$ is a four-punctured sphere and
\[
F-\Int P=X_1\sqcup X_2\sqcup X_3\sqcup X_4,
\]
where each $X_i$ has one boundary component and genus $h_i\ge1$. Label the components so that $\alpha$ separates $X_1\cup X_2$ from $X_3\cup X_4$, while $\beta$ separates $X_1\cup X_3$ from $X_2\cup X_4$. In particular,
\[
g(F)=h_1+h_2+h_3+h_4.
\]

It remains to show that $h_i=1$ for every $i$. Suppose otherwise. Without loss of generality, assume that $h_1\ge2$. There are infinitely many separating curves $\delta\subset\Int X_1$, pairwise nonisotopic in $F$, each of which bounds a once-punctured torus in $X_1$. By Lemma~\ref{lem:SW}(1), we may choose such a $\delta$ so that $M'=M(\delta)$ is simple. Compressing $F$ along $\delta$ yields a torus and a boundary component $F'$ of genus $g(F)-1$.

\medskip
\noindent\emph{Claim 1.} $(M',F',\alpha,\beta)\in\mathcal C$.

\smallskip
\noindent\emph{Proof of Claim 1.}
The component $F'$ is obtained from $F$ by replacing $X_1$ with a surface of genus $h_1-1\ge1$ having one boundary component; $P$, $\alpha$, $\beta$, and $X_2,X_3,X_4$ are unchanged. Consequently $F'$ contains $\alpha$ and $\beta$ as essential separating slopes. Any new bigon would give a disk component of $F'-(\alpha\cup\beta)$, whereas all four complementary components of $P$ have positive genus. Thus $\D_{F'}(\alpha,\beta)=2$.

It remains to verify that $M'(\alpha)$ and $M'(\beta)$ are reducible. Since $\delta$ is disjoint from $\alpha$, the handle additions commute:
\[
M'(\alpha)\cong M(\alpha)(\delta).
\]
The $\alpha$--handle attachment caps the boundary of the component of $F-\alpha$ containing $X_1\cup X_2$, producing a boundary component of $M(\alpha)$ of genus $h_1+h_2\ge3$. The curve $\delta$ remains essential on this component. Lemma~\ref{lem:reducibility-preserved} therefore shows that $M(\alpha)(\delta)$, and hence $M'(\alpha)$, is reducible. The same argument with $\beta$ in place of $\alpha$ shows that $M'(\beta)$ is reducible. Thus $(M',F',\alpha,\beta)\in\mathcal C$. \qed

Since $g(F')=g(F)-1$, Claim~1 contradicts the minimality of $g(F)$. Hence $h_i=1$ for every $i$, and therefore $g(F)=4$.
\end{proof}

\begin{lemma}\label{lem:boundary-normalized}
There exists a quadruple $(M,F,\alpha,\beta)\in\mathcal C$ such that $g(F)=4$ and $F$ is the only non-toral component of $\partial M$.
\end{lemma}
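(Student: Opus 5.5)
We start from the quadruple $(M,F,\alpha,\beta)$ produced by Proposition~\ref{prop:genus4}, so $g(F)=4$ and $\D(\alpha,\beta)=2$. If $F$ is already the only non-toral component of $\partial M$ we are done. Otherwise we eliminate each remaining higher-genus boundary component $G\neq F$ with $g(G)\ge2$ by handle additions along $G$, one genus at a time. Each step must keep the quadruple in $\mathcal C$. Since $g(F)$ never changes, no minimality of genus is needed.

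The inductive step is as follows. Let $G\subset\partial M$ be a non-toral component other than $F$, so $g(G)\ge2$. The curves on $G$ bounding a once-punctured torus give infinitely many pairwise nonisotopic separating slopes $\delta\subset G$. Lemma~\ref{lem:SW}(1), applied to the boundary component $G$, shows that only finitely many of them are degenerating. So we may choose $\delta$ with $M'=M(\delta)$ simple. In $M'$ the component $G$ is replaced by a torus and a component of genus $g(G)-1$. The component $F$ is untouched, so $\alpha,\beta$ are still separating slopes on $F$ with $\D(\alpha,\beta)=2$; distance is computed in $F$ alone.

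Reducibility is handled exactly as in Claim~1. The handle additions commute, so $M'(\alpha)\cong M(\alpha)(\delta)$. Lemma~\ref{lem:reducibility-preserved} applies because $G$ is still a boundary component of $M(\alpha)$ of genus at least two and $\delta$ is essential on it. Hence $M'(\alpha)$ is reducible, and likewise $M'(\beta)$. Thus $(M',F,\alpha,\beta)\in\mathcal C$ with $g(F)=4$. The total genus of non-toral components other than $F$, counted as $\sum(g(G)-1)$, strictly decreases, so iterating terminates. At the end every component other than $F$ is a torus, and simplicity is maintained at each step.

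The only point requiring care is the application of Lemma~\ref{lem:SW}(1) to a boundary component other than the one carrying $\alpha,\beta$. This is legitimate because that lemma holds for any boundary component of genus at least two of a simple manifold. One should also check that the genus-one component $G$ of $M(\delta)$ causes no trouble. It is a torus, which is allowed in the sutured setting, and simplicity of $M'$ is exactly what Lemma~\ref{lem:SW}(1) supplies. Everything else is a direct repetition of the argument in Proposition~\ref{prop:genus4}.
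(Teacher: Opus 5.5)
Your proposal is correct and follows essentially the same route as the paper. You pick a non-degenerating separating slope $\delta$ on an extra higher-genus component $G$ via Lemma~\ref{lem:SW}(1), commute the handle additions, and apply Lemma~\ref{lem:reducibility-preserved}; your explicit complexity $\sum(g(G)-1)$ makes the paper's ``finitely many times'' precise. The only small point you pass over is why a non-toral $G\neq F$ must have $g(G)\ge2$: the paper notes that $\partial M$ has no sphere components, since $M$ is irreducible and has a boundary component of positive genus.
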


\begin{proof}
Let $(M,F,\alpha,\beta)\in\mathcal C$ be the minimal counterexample considered in Proposition~\ref{prop:genus4}. Since $M$ is irreducible and has a boundary component of positive genus, $\partial M$ has no sphere components. Suppose that $G\ne F$ is a boundary component with $h=g(G)\ge2$. There are infinitely many separating slopes on $G$, pairwise nonisotopic in $G$, each bounding a once-punctured torus. By Lemma~\ref{lem:SW}(1), choose one, say $\delta$, such that $M'=M(\delta)$ is simple.

Since $M'$ is simple by construction and the data $F,\alpha,\beta$ are unchanged, it remains only to verify reducibility of $M'(\alpha)$ and $M'(\beta)$. Since $\delta\subset G$ and $\alpha,\beta\subset F$, the handle additions commute:
\[
M'(\alpha)=M(\delta)(\alpha)\cong M(\alpha)(\delta),\qquad
M'(\beta)=M(\delta)(\beta)\cong M(\beta)(\delta).
\]
Because the handle additions along $\alpha$ and $\beta$ are supported on $F$, the boundary component $G$ is unchanged in both filled manifolds. Lemma~\ref{lem:reducibility-preserved} therefore shows that $M(\alpha)(\delta)$ and $M(\beta)(\delta)$ are reducible. It follows that $M'(\alpha)$ and $M'(\beta)$ are reducible, and hence $(M',F,\alpha,\beta)\in\mathcal C$.

Repeating the construction finitely many times yields a quadruple in $\mathcal C$ for which every boundary component other than $F$ is a torus.
\end{proof}

\section{The genus-four contradiction}

Fix a quadruple $(M,F,\alpha,\beta)\in\mathcal C$ as in Lemma~\ref{lem:boundary-normalized}, and retain the notation and labeling of Proposition~\ref{prop:genus4}. By Corollary~\ref{cor:filled-boundary-irred}, both $M(\alpha)$ and $M(\beta)$ are boundary-irreducible. Set $c_i=\partial X_i$ for $1\le i\le4$.

\subsection{A taut sutured structure}

Cutting $P$ along $\alpha$ gives two pairs of pants $P_{12}$ and $P_{34}$ with
\[
\partial P_{12}=\alpha\cup c_1\cup c_2,\qquad
\partial P_{34}=\alpha\cup c_3\cup c_4.
\]
Set $\gamma=\{\alpha,c_1,c_2,c_3,c_4\}$, and let $A(\gamma)$ be a union of pairwise disjoint annular neighborhoods of these curves. Let $T(\gamma)$ be the union of the torus components of $\partial M-F$. Orient the sutures so that the complementary regions have the signs shown in Figure~\ref{fig:sutures}; thus
\[
R_+(\gamma)=\cl\bigl((P_{12}\sqcup X_3\sqcup X_4)-A(\gamma)\bigr),
\qquad
R_-(\gamma)=\cl\bigl((P_{34}\sqcup X_1\sqcup X_2)-A(\gamma)\bigr).
\]
Because every annulus of $A(\gamma)$ is adjacent to one positive and one negative region, these choices define a sutured manifold. Each of $R_+(\gamma)$ and $R_-(\gamma)$ is homeomorphic to the disjoint union of a pair of pants and two once-punctured tori, and hence has norm $3$.

\begin{figure}[htbp]
\centering
\includegraphics[width=.88\textwidth]{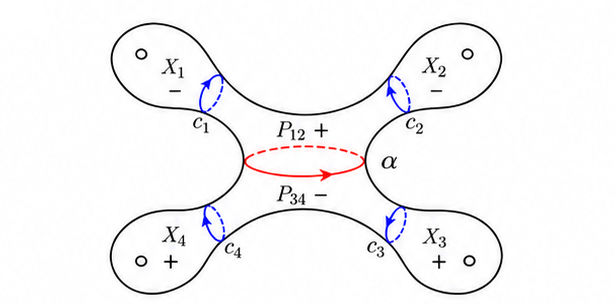}
\caption{The genus-four configuration. The signs indicate the components of $R_+(\gamma)$ and $R_-(\gamma)$; the colored curves indicate the sutures.}
\label{fig:sutures}
\end{figure}

\begin{lemma}\label{lem:taut}
The sutured manifold $(M,\gamma)$ is taut.
\end{lemma}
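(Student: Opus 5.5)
The plan is to verify the four conditions in the definition of tautness one at a time. The first three follow directly from simplicity of $M$. The fourth, norm minimization, reduces to a short Euler characteristic count that uses only that $M$ is boundary-irreducible and anannular; I expect that count to be the main step, although it should not be hard.

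\emph{Irreducibility and the tori.} The manifold $M$ is irreducible because it is simple. Each torus component of $T(\gamma)$ is incompressible because $M$ is boundary-irreducible.

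\emph{Incompressibility of $R_\pm(\gamma)$.} Each component of $R_\pm(\gamma)$ is a pair of pants or a once-punctured torus. Its boundary curves are copies of $\alpha$ or of the $c_i$, and these are essential in $F$.
\begin{itemize}
\item Every complementary region of $R_+(\gamma)$ in $F$ has positive genus or is a pair of pants with essential boundary, so no such region is a disk. Hence a curve that is essential in $R_+(\gamma)$ is essential in $F$.
\item Suppose $D$ is a compressing disk for $R_+(\gamma)$. Then $\partial D$ is essential in $F$, and $D$ is a compressing disk for $F$. This contradicts boundary-irreducibility of $M$.
\end{itemize}
The same argument applies to $R_-(\gamma)$.

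\emph{Norm minimization.} Let $S$ be an oriented surface with $\partial S=\partial R_+(\gamma)$ in the same relative homology class. I will show $x(S)\ge 3=x(R_+(\gamma))$. The boundary $\partial S$ consists of five curves, parallel to $\alpha,c_1,\dots,c_4$.

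First I will discard closed components of $S$. Their contribution to $x$ is nonnegative, so removing them does not increase the norm. I will only need a lower bound for the components that meet $\partial M$; this is where I would be careful to argue that dropping closed components loses nothing for the bound. Next I will compress and discard trivial pieces, so that no component of $S$ is a disk or an annulus.
\begin{itemize}
\item A disk component would have essential boundary on $F$, hence would be a compressing disk for $F$. This is impossible.
\item An annulus component is incompressible after compression, since its boundary curves are essential in $F$. Because $M$ is anannular, such an annulus is boundary parallel. Its two boundary curves would then be isotopic in $F$. But $\alpha,c_1,\dots,c_4$ are pairwise non-isotopic in $F$, since each pair cobounds a region of positive genus or a pair of pants.
\end{itemize}

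Now each remaining component $S_j$ has genus $g_j$, $b_j\ge1$ boundary curves, and $-\chi(S_j)=2g_j-2+b_j\ge1$, with $\sum_j b_j=5$. Excluding disks and annuli gives the following lower bounds:
\begin{itemize}
\item if $b_j=1$, then $g_j\ge1$ and $-\chi(S_j)\ge1$;
\item if $b_j=2$, then $g_j\ge1$ and $-\chi(S_j)\ge2$;
\item if $b_j\ge3$, then $-\chi(S_j)\ge b_j-2$.
\end{itemize}
Writing $5$ as a sum of the $b_j$ (for instance $5$, $4+1$, $3+2$, $3+1+1$, $2+2+1$, $2+1+1+1$, $1+1+1+1+1$) shows that the total is always at least $3$. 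Hence $x(S)\ge3$, and $R_+(\gamma)$ minimizes $x$. The identical argument applies to $R_-(\gamma)$.
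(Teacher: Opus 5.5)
Your proof is correct and follows essentially the same route as the paper. Simplicity of $M$ gives irreducibility, incompressible tori, and incompressibility of $R_\pm(\gamma)$. Norm minimization then comes from ruling out disk and annulus components via boundary-irreducibility, anannularity, and the pairwise non-parallel curves of $\partial S$, followed by an Euler characteristic count.

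The differences are cosmetic. Your preliminary compression step is unnecessary: an annulus component is automatically incompressible because its boundary curves are essential in $F$. You also enumerate the partitions of $5$, whereas the paper notes that a norm-one surface has an odd number of boundary components.
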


\begin{proof}
Since $M$ is simple, it is irreducible and boundary-irreducible; in particular, the torus components in $T(\gamma)$ are incompressible. All boundary curves of the components of $R(\gamma)$ are essential in $F$, so these components are incompressible in $F$. Since $F$ is incompressible in $M$, they are also incompressible in $M$. It remains only to prove that $R_+(\gamma)$ and $R_-(\gamma)$ are Thurston-norm minimizing.

Let $S\subset M$ be an oriented properly embedded surface with the same oriented boundary as $R_+(\gamma)$ and representing the same relative homology class as $R_+(\gamma)$. Its boundary consists of five pairwise nonparallel essential curves parallel to $\alpha,c_1,c_2,c_3,c_4$. Write $S=\bigsqcup_jS_j$, with each $S_j$ connected.

\medskip
\noindent\emph{Claim 1.} Every component $S_j$ with nonempty boundary has positive norm.

\smallskip
\noindent\emph{Proof of Claim 1.}
A disk component would give a compressing disk for $F$, contrary to boundary-irreducibility. An annular component is incompressible, for otherwise a compression would yield a disk in $M$ bounded by one of its boundary components, contradicting the incompressibility of $F$. The two boundary curves of such an annulus are among the five pairwise nonparallel components of $\partial S$, so it cannot be boundary-parallel. It would therefore be an essential annulus in $M$, contradicting anannularity. Hence $\chi(S_j)<0$, and $x(S_j)>0$. \qed

Suppose, for contradiction, that $x(S)\le2$. By Claim~1 and the fact that $\partial S\ne\varnothing$, $S$ has either one or two components with nonempty boundary. We distinguish these two cases.

\smallskip
\noindent\emph{Case 1.} Exactly one component of $S$ has nonempty boundary. It has all five boundary components of $S$. If its genus is $h$, then its norm is $2h+3\ge3$, contradicting $x(S)\le2$.

\smallskip
\noindent\emph{Case 2.} Exactly two components of $S$ have nonempty boundary. Since $x(S)\le2$, each has norm $1$. A connected orientable surface of norm $1$ has an odd number of boundary components. Hence the total number of boundary components of these two surfaces is even, contradicting $|\partial S|=5$.

Therefore $x(S)\ge3=x(R_+(\gamma))$, so $R_+(\gamma)$ is Thurston-norm minimizing. The same argument applies to $R_-(\gamma)$. Thus $(M,\gamma)$ is taut.
\end{proof}

\subsection{Proof of Theorem~\ref{thm:main}}

\begin{proof}
Let $H_\beta$ be the $2$--handle attached along $\beta$. Choose a reducing sphere $\widehat Q\subset M(\beta)$ minimizing the number of meridian disks in $\widehat Q\cap H_\beta$, and put $Q=\widehat Q\cap M$. The surface $Q$ is connected and planar, with all boundary components parallel to $\beta$. Writing $q=|\partial Q|$, we have $\chi(Q)=2-q$; irreducibility of $M$ implies $q\ge1$. The proof of Lemma~2.1 in \cite{QZ} shows that the minimality of $|\widehat Q\cap H_\beta|$, together with the boundary-irreducibility of $M(\beta)$, implies that $Q$ is incompressible and boundary-incompressible.

To verify the hypotheses of Theorem~\ref{thm:Taylor}, isotope the components of $\partial Q$ to parallel copies of $\beta$ in $P$, disjoint from the annuli about $c_1,\ldots,c_4$ and meeting the annulus about $\alpha$ in two spanning arcs. Thus condition~(P1) holds. Since $q\ge1$, $Q$ is not a sphere; if $Q$ is a disk, then its boundary meets $\alpha\subset\gamma$. Hence condition~(P2) holds, and $Q$ is a parameterizing surface, with
\[
Q\cap(\gamma-\alpha)=\varnothing,\qquad
|Q\cap\gamma|=|Q\cap\alpha|=2q.
\]
By construction, the portions of $\partial Q$ lying in the two components of $R(\gamma)$ adjacent to $\alpha$ are essential arcs. These observations, together with Lemma~\ref{lem:taut}, verify all the hypotheses of Theorem~\ref{thm:Taylor}, which we apply with $N=M$ and $b=\alpha$.

The first three alternatives are ruled out in turn. Since $Q$ is incompressible, alternative~(1) would yield an $\alpha$--boundary-compressing disk $D$ with $\partial D=a\cup d$, where $a\subset Q$ and $d\subset\alpha$. The arc $a$ cannot be essential in $Q$, for otherwise $D$ would be an ordinary boundary-compressing disk for $Q$. Thus $a$ cobounds a disk $E\subset Q$ with an arc $e\subset\partial Q$. After a small perturbation, $D\cup_aE$ is a properly embedded disk in $M$ with boundary $d\cup e$. Boundary-irreducibility of $M$ implies that $d\cup e$ is inessential in $F$. Choose the disk $B\subset F$ bounded by $d\cup e$ innermost with respect to $\alpha\cup\partial Q$. Then $B$ is a bigon between $\alpha$ and the corresponding component of $\partial Q$, contradicting minimal position. Thus alternative~(1) is impossible.

If alternative~(2) holds, let $S\subset M(\alpha)$ be the corresponding summing sphere, so that $|S\cap\kappa_\alpha|=2$, and let $H$ denote the attached $2$--handle. Isotope $S$ so that $S\cap H$ consists of two meridian disks. Then $A=S\cap M$ is a properly embedded annulus whose boundary components are parallel to $\alpha$. The incompressibility of $F$ and essentiality of $\alpha$ imply that $A$ is incompressible; anannularity of $M$ therefore forces $A$ to be boundary-parallel to an annulus $A_0\subset F$. The product region between $A$ and $A_0$ is a solid torus in which $\alpha$ is a longitude. Attaching the portion of $H$ between the two disks of $S\cap H$ gives a $3$--ball bounded by $S$, in which the corresponding subarc of $\kappa_\alpha$ is boundary-parallel. This contradicts Remark~\ref{rmk:Taylor-summand}, so alternative~(2) is impossible.

Alternative~(3) is also impossible: $M(\alpha)$ is reducible, whereas the underlying manifold of a taut sutured manifold is irreducible. Only alternative~(4) remains. Substituting $\chi(Q)=2-q$ and the intersection counts above gives
\[
4q-4=-2\chi(Q)+|Q\cap\gamma|
   \ge2|Q\cap\alpha|=4q,
\]
a contradiction. This proves Theorem~\ref{thm:main}.
\end{proof}

\section*{Acknowledgements}

I am grateful to Ruifeng Qiu for suggesting, more than a decade ago, that I use Taylor's methods to study handle additions.

\section*{Declaration on the use of AI}

During the preparation of this work, the author used artificial intelligence (AI) tools solely for English-language polishing. After using these tools, the author reviewed and edited the content as needed and takes full responsibility for the content of the manuscript.

\end{document}